\numberwithin{equation}{section}
\theoremstyle{plain}
\newtheorem{theorem}{Theorem}%[section]
\newtheorem{lemma}[theorem]{Lemma}
\newtheorem{proposition}[theorem]{Proposition}
\theoremstyle{definition}
\theoremstyle{remark}
\newtheorem{remark}[theorem]{Remark}
\renewcommand{\Re}{\operatorname{Re}}
\newcommand{\supp}{\operatorname{supp}}
\newcommand{\Sym}{\operatorname{Sym}}
\newcommand{\GL}{\operatorname{GL}}
\newcommand{\SL}{\operatorname{SL}}
\newcommand{\dd}{\mathrm{d}}
\def\@tocline#1#2#3#4#5#6#7{\relax
  \ifnum #1>\c@tocdepth % then omit
  \else
    \par \addpenalty\@secpenalty\addvspace{#2}%
    \begingroup \hyphenpenalty\@M
    \@ifempty{#4}{%
      \@tempdima\csname r@tocindent\number#1\endcsname\relax
    }{%
      \@tempdima#4\relax
    }%
    \parindent\z@ \leftskip#3\relax \advance\leftskip\@tempdima\relax
    \rightskip\@pnumwidth plus4em \parfillskip-\@pnumwidth
    #5\leavevmode\hskip-\@tempdima
      \ifcase #1
       \or\or \hskip 1em \or \hskip 2em \else \hskip 3em \fi%
      #6\nobreak\relax
    \hfill\hbox to\@pnumwidth{\@tocpagenum{#7}}\par% <---- \dotfill -> \hfill
    \nobreak
    \endgroup
  \fi}
\begin{document}

\title%[The Rankin--Selberg problem]
{On the Rankin--Selberg problem, II}
\author{Bingrong Huang}
\address{Data Science Institute and School of Mathematics \\ Shandong University \\ Jinan \\ Shandong 250100 \\China}
\email{brhuang@sdu.edu.cn}

\dedicatory{Dedicated to Xiumin Ren on the occasion of her 60th birthday}

\date{\today}

\begin{abstract}
  In this paper, we improve our bounds on the Rankin--Selberg problem. That is, we obtain smaller error term of the second moment of Fourier coefficients of a $\GL(2)$ cusp form (both holomorphic and Maass).
\end{abstract}

\keywords{The Rankin--Selberg problem, Hecke eigenvalues, second moment, $L$-functions}

\subjclass[2010]{11F30, 11L07, 11F66}

\thanks{This work was supported by  the National Key R\&D Program of China (No. 2021YFA1000700) and NSFC (Nos. 12001314 and 12031008).}

\maketitle
%\setcounter{tocdepth}{1}%使目录只显示到节
%\tableofcontents

%%%%%%%%%%%%%%%%%%%%%%%%%%%%%%%%%%%%%%%%%%%%%%%%%%%%%%%%%%%%%%%%
%%%%%                        Section                       %%%%%
%%%%%%%%%%%%%%%%%%%%%%%%%%%%%%%%%%%%%%%%%%%%%%%%%%%%%%%%%%%%%%%%
\section{Introduction} \label{sec:Intr}

This is a continuation of our paper \cite{huang2021rankin}, where we break the barrier $3/5$ on the Rankin--Selberg problem \cite{Rankin1939,Selberg1940}. In this paper, we will refine the approach and improve the main result there.

Let $\phi$ be a  holomorphic Hecke cusp form or
Hecke--Maass cusp form for $\SL(2,\mathbb{Z})$.
Let $\lambda_\phi(n)$ be its $n$-th Hecke eigenvalue. We define
\[
  \mathcal S_2(X,\phi) := \sum_{n\leq X} \lambda_\phi(n)^2
  \quad \textrm{and} \quad
  \Delta_2(X,\phi) := \mathcal S_2(X,\phi) - c_\phi X,
\]
where $c_\phi = L(1,\Sym^2 \phi)/\zeta(2)$ and $L(s,\Sym^2 \phi)$ is the symmetric square $L$-function of $\phi$.
In 1939/1940, Rankin \cite{Rankin1939} and Selberg  \cite{Selberg1940} invented the powerful Rankin--Selberg method, and successfully showed that
\[
  \Delta_2(X,\phi) \ll_\phi X^{3/5}.
\]
This bound remained the best since it was proved for more than 80 years.
The \emph{Rankin--Selberg problem} is to improve the exponent $3/5$.
In \cite{huang2021rankin}, we proved
  \begin{equation}\label{eqn:Delta<<}
    \Delta_2(X,\phi) \ll_{\phi,\delta} X^{3/5-\delta},
  \end{equation}
  for any $\delta < 1/560 =0.001785...$.
In a recent preprint \cite{pal}, Pal proved that $\delta<6/1085=0.005529...$ is admissible in \eqref{eqn:Delta<<} by using nontrivial bounds for the integral second moment of degree three $L$-function.
In this paper, we prove the following theorem.
\begin{theorem}\label{thm:RS}
  In \eqref{eqn:Delta<<}, any $\delta<3/305=0.009836...$ is admissible.
\end{theorem}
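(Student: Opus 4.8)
The plan is to transfer the second moment of $\lambda_\phi(n)$ to a summatory problem for the degree-three form $\Sym^2\phi$, and then to beat the classical $3/5$ exponent by estimating the resulting dual sum with an efficient use of the second moment of $L(s,\Sym^2\phi)$ on the critical line. First I would pass to smooth sums and reduce to the degree-three object. Replacing the sharp cutoff $\sum_{n\le X}$ by a smooth weight $\sum_n \lambda_\phi(n)^2 V(n/X)$ costs only an admissible amount once the smoothing scale is a small power of $X$ below the target, and a dyadic decomposition reduces matters to a single scale. The arithmetic input is the factorisation $\sum_n \lambda_\phi(n)^2 n^{-s} = \zeta(s)L(s,\Sym^2\phi)/\zeta(2s)$. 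Writing $\zeta(s)L(s,\Sym^2\phi)=\sum_m a(m)m^{-s}$ and removing $\zeta(2s)^{-1}=\sum_d\mu(d)d^{-2s}$ gives $\lambda_\phi(n)^2=\sum_{d^2 m=n}\mu(d)a(m)$, hence $\Delta_2(X,\phi)=\sum_{d\le\sqrt X}\mu(d)E(X/d^2)$, where $E(Y)=\sum_{m\le Y}a(m)-L(1,\Sym^2\phi)Y$ is the error term for $\sum_{m\le Y}a(m)=\sum_{\ell k\le Y}\lambda_{\Sym^2\phi}(k)$. Since the eventual exponent $3/5-\delta$ exceeds $1/2$, the $d$-sum converges and it suffices to prove $E(Y)\ll Y^{3/5-\delta}$.

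Next I would transform $E(Y)$ by $\GL(3)$ Voronoi summation. Applying Voronoi for $L(s,\Sym^2\phi)$ to the inner sum $\sum_{k\le Y/\ell}\lambda_{\Sym^2\phi}(k)$, at scale $Z=Y/\ell$, and then summing over $\ell$, turns $E(Y)$ into a dual sum $\sum_\ell\sum_n \lambda_{\Sym^2\phi}(n)\,\Phi_\ell(n)$, where $\Phi_\ell$ is a Hankel-type transform of the cutoff. A stationary-phase expansion shows that $\Phi_\ell(n)$ is essentially supported on a dual range and oscillates like $e\!\left(3(nY/\ell)^{1/3}\right)$ up to lower-order terms. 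The classical $3/5$ bound is exactly what one gets by estimating this dual sum generically, i.e. by treating $\lambda_\phi(n)^2$ as the coefficients of an unfactored degree-four object; the entire gain must come from using the factorisation, namely that the dual coefficients are those of the single degree-three form $\Sym^2\phi$.

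The hard part is the estimation of the dual sum, and this is where the improved $\delta<3/305$ is produced. I would split the $\ell$-range at a parameter to be optimised. Where the dual variable is long, I would write the $n$-sum through a Mellin transform of $L(s,\Sym^2\phi)$ and invoke a nontrivial bound for the second moment $\int_T^{2T}|L(1/2+it,\Sym^2\phi)|^2\dd t$; this replaces the pointwise size of $\lambda_{\Sym^2\phi}(n)$ by a mean-square gain and is the crucial input, refining the way it enters in \cite{pal}. In the complementary range I would instead use a subconvex pointwise bound for $L(1/2+it,\Sym^2\phi)$ together with van der Corput estimates (a further $B$-process) for the oscillatory $n$-sum. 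Balancing the two regimes across the split parameter, and then optimising the smoothing scale against the length of the dual sum, is what yields the exponent $3/5-3/305$.

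The principal obstacle is to combine the second moment bound with the oscillation of the Voronoi kernel without losing the $\ell$-oscillation, since it is precisely the interference between the $\ell$- and $n$-phases that pushes $\delta$ up to $3/305$. Retaining it forces a careful stationary-phase expansion of $\Phi_\ell$ with explicit control of the secondary terms, uniformly over all dyadic ranges; moreover the second moment is efficient only in a middle range of the dual variable, so the two endpoints must be handled by the pointwise method. The final constant $3/305$ emerges exactly from optimising the crossover between the moment and pointwise estimates.
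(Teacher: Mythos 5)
Your reduction steps are sound and match the paper's skeleton: removing $\zeta(2s)^{-1}$ to pass from $\lambda_\phi(n)^2$ to the coefficients of $\zeta(s)L(s,\Sym^2\phi)$, smoothing at a scale just below $X^{3/5}$, and converting the error term by Voronoi/functional-equation analysis into an oscillatory dual sum that is bilinear in the pair $(\ell,n)$ coming from $\lambda_{1\boxplus\Sym^2\phi}(n)=\sum_{\ell m=n}A(1,m)$. Up to that point you are tracking the paper (compare \eqref{eqn:I<<sum} and \eqref{eqn:B(N)<<B(LM)}), and you correctly identify that the whole gain must come from exploiting this factorisation rather than treating the coefficients as an unfactored degree-four object.

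The genuine gap is in the engine you propose for the dual sum, which cannot reach $\delta=3/305$, and your proposal contains no numerology supporting the claim that it does. You suggest (a) the integral second moment of $L(1/2+it,\Sym^2\phi)$ in one range and (b) pointwise subconvexity for $L(1/2+it,\Sym^2\phi)$ combined with a further van der Corput $B$-process in the complementary range. Route (a) is precisely Pal's method, which the introduction records as giving only $\delta<6/1085\approx 0.0055$; route (b) is precisely the exponential-pair mechanism of the first paper in this series, which gives only $\delta<1/560\approx 0.0018$, and the paper stresses that $3/305$ matches what the \emph{conjectural} exponent pair $(0,1/2)$ would give---i.e., it lies strictly beyond any proved $A/B$-process chain. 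The actual new ingredient, absent from your proposal, is an elementary bilinear treatment of $\mathcal B(L,M)=\sum_{\ell\asymp L}\sum_{m\asymp M}A(1,m)\,e\bigl(-4(\ell mX)^{1/4}\bigr)$ in three regimes of $L$: Poisson summation in the smooth variable $\ell$ for large $L$, yielding \eqref{eqn:largeL}; Cauchy--Schwarz in $m$ (eliminating $A(1,m)$ via the Rankin--Selberg mean-square bound \eqref{eqn:RS3}, which is also what makes GRC unnecessary for $\Sym^2\phi$) followed by Poisson in $m$, yielding \eqref{eqn:mediumL}; and the twisted $\GL(3)$ sum bound $\mathscr S(N)\ll T^{3/10}N^{3/4+\varepsilon}$ of Lemma \ref{lemma:3} for small $L$, yielding \eqref{eqn:smallL}. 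The exponent $53/15$ of Proposition \ref{prop:B}, and hence $3/305$, emerges from balancing these three bounds at $L_0=T^{14/15}X^{-1/3}$ in the window $X^{5/14+\varepsilon}\le T\le X^{5/12-\varepsilon}$; none of these inputs is a moment estimate or a subconvexity bound for $L(s,\Sym^2\phi)$ on the critical line. As written, your final sentence asserting that ``optimising the crossover between the moment and pointwise estimates'' produces $3/305$ is not merely unproved but, with the stated inputs, false.
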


To prove Theorem \ref{thm:RS}, comparing to \cite{huang2021rankin}, we avoid the use of the theory of exponential pairs.
  Instead, we should make use of the bilinear structure in the dual sum of $\mathcal S_2(X,\phi)$.
  The result above is as strong as what we can get in \cite{huang2021rankin} with the exponential pair conjecture. See \cite[Remark 2]{huang2021rankin} with $(k,\ell)=(0,1/2)$.
%One may improve our result a little bit  further by working directly on the analytic twisted $\GL(3)$ exponential sums with an additional $\ell$-sum as in \cite[\S5]{huang2021rankin}.
%\emph{Cf.} \S \ref{subsec:AT3} and \S\ref{subsubsec:smallL}.
  Our new treatment  is motivated by a recent work \cite{KLM} of Kowalski--Lin--Michel, where they considered the Rankin--Selberg coefficients in large arithmetic progressions.

\begin{remark}
  We did not try to obtain the best exponent. For example, one may apply the recent method of Dasaratharaman--Munshi \cite{DM} to improve bounds for the exponential sums involving the coefficients of a self dual $\GL(3)$ automorphic form, which may improve our Theorem \ref{thm:RS} a little bit. But this may not work for generic $\GL(3)$ forms as in Theorem \ref{thm:1+3} below.
\end{remark}

Let
\begin{equation*}%\label{eqn:RS=1+Sym2}
 L(s,\phi\times \phi) = \zeta(2s) \sum_{n=1}^{\infty} \frac{\lambda_\phi(n)^2}{n^s}, \quad \Re(s)>1
\end{equation*}
be the Rankin--Selberg $L$-function of $\phi\times \phi$.
Note that $\phi\times \phi = 1 \boxplus \Sym^2 \phi$, where $\Sym^2 \phi$ is the symmetric square lift of $\phi$. That is $L(s,\phi\times \phi) = \zeta(s)L(s,\Sym^2 \phi)$.
In \cite{GelbartJacquet}, Gelbart--Jacquet proved that $\Sym^2 \phi$ is an automorphic cuspidal representation for $\GL(3)$.
As in \cite{huang2021rankin}, to prove Theorem \ref{thm:RS}, we first consider
\[
  \mathcal A(X,\phi\times \phi) :=\sum_{n\leq X} \lambda_{\phi\times \phi}(n)
  = \sum_{n\leq X} \lambda_{1 \boxplus \Sym^2 \phi}(n).
\]
More generally, we consider $\mathcal A(X,1\boxplus \Phi) = \sum_{n\leq X} \lambda_{1 \boxplus \Phi}(n)$, where  $\Phi$ is a Hecke--Maass cusp form for $\SL(3,\mathbb Z)$ with $A_\Phi(m,n)$  the normalized Fourier coefficients of $\Phi$. The \emph{generalized Ramanujan conjecture} (GRC) for $\Phi$ asserts that $A_\Phi(1,n) \ll n^{o(1)}$.
We prove the following result.

\begin{theorem}\label{thm:1+3}
  %With the notation as above.
  Assuming GRC for $\Phi$. Then we have
  \begin{equation*}%\label{eqn:A=}
    \mathcal A(X,1\boxplus\Phi)  = L(1,\Phi) \, X + O_{\Phi,\delta}( X^{3/5-\delta} ),
  \end{equation*}
  for any $\delta < 3/305=0.009836...$.

  If $\Phi=\Sym^2 \phi$, then we don't need to assume GRC for $\Phi$.
\end{theorem}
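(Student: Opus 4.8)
The plan is to prove Theorem~\ref{thm:1+3} by analysing the summatory function through the degree-four functional equation of $L(s,1\boxplus\Phi)=\zeta(s)L(s,\Phi)$ and then exploiting the bilinear ($\GL(1)\times\GL(3)$) factorisation of its Dirichlet coefficients. Since $\lambda_{1\boxplus\Phi}=1\ast A_\Phi(1,\cdot)$, one has
\[
  \mathcal A(X,1\boxplus\Phi)=\sum_{d}A_\Phi(1,d)\Big\lfloor \frac{X}{d}\Big\rfloor,
\]
whose leading behaviour is $X\sum_{d}A_\Phi(1,d)/d=L(1,\Phi)\,X$. First I would make this rigorous via Perron's formula (or a smooth approximation to $\mathbf 1_{[0,X]}$), shifting the contour to the left of $\Re(s)=0$: the only pole is the simple pole of $\zeta(s)$ at $s=1$, producing the main term $L(1,\Phi)X$, while the shifted integral is converted by the functional equation into a dual sum. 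After this step the error term takes the shape of a truncated Voronoi series
\[
  \Delta(X):=\mathcal A(X,1\boxplus\Phi)-L(1,\Phi)X
  \;\asymp\; X^{3/8}\sum_{n\ll X^{1+\varepsilon}}
  \frac{\lambda_{1\boxplus\tilde\Phi}(n)}{n^{5/8}}\,
  e\big(4(nX)^{1/4}\big),
\]
the exponents $3/8$ and $5/8$ being the usual $(d-1)/(2d)$ and $(d+1)/(2d)$ for degree $d=4$. Bounding this using only the mean-value estimate $\sum_{n\le N}|\lambda_{1\boxplus\tilde\Phi}(n)|\ll N^{1+\varepsilon}$ reproduces the classical exponent $X^{3/4}$, so the entire gain must come from extracting cancellation out of the oscillation $e(4(nX)^{1/4})$.

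Here I would depart from the exponent-pair treatment of \cite{huang2021rankin} and instead open the factorisation $\lambda_{1\boxplus\tilde\Phi}(n)=\sum_{ab=n}A_{\tilde\Phi}(1,a)$. Decomposing dyadically into $a\asymp A$ and $b\asymp B$ with $AB\asymp M\ll X$, the problem reduces to bounding the bilinear sums
\[
  \mathcal B(A,B):=\sum_{a\asymp A}A_{\tilde\Phi}(1,a)
  \sum_{b\asymp B}e\big(4(abX)^{1/4}\big).
\]
I would treat these in two regimes. When the smooth variable $b$ runs over a long range, the inner sum is a pure exponential sum with phase $4(aX)^{1/4}b^{1/4}$ and is estimated by Poisson summation or van der Corput. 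When instead the $\GL(3)$ variable $a$ is long, I would apply the $\GL(3)$ Voronoi formula to the $a$-sum, transforming it into a shorter dual sum weighted by hyper-Kloosterman sums and a new oscillatory kernel; alternatively one applies Cauchy--Schwarz in $b$ and opens the square, reducing to a correlation sum $\sum_{a_1,a_2}A_{\tilde\Phi}(1,a_1)\overline{A_{\tilde\Phi}(1,a_2)}\sum_b e(\cdots)$ controlled by the Rankin--Selberg mean value for $\Phi$. This last route uses only square-root cancellation \emph{on average}, which is why GRC is required solely in the genuinely $\GL(3)$ ranges; when $\Phi=\Sym^2\phi$ is self-dual its coefficients satisfy $\sum_{a\le A}|A_{\Sym^2\phi}(1,a)|^2\ll A^{1+\varepsilon}$ unconditionally, so the hypothesis can be dispensed with entirely in that case.

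Finally I would balance the two regimes against the dyadic parameters $A,B$ (and the truncation length $N$), optimising the crossover point to minimise the resulting exponent and thereby reach $3/5-\delta$ for any $\delta<3/305$. I expect the main obstacle to lie in the intermediate range, where neither the van der Corput bound on the $b$-sum nor the $\GL(3)$ Voronoi transform of the $a$-sum is individually strong enough: one must combine the two, controlling the hyper-Kloosterman sums and the post-Voronoi oscillation simultaneously with the $b$-average while invoking GRC only in mean square. It is precisely this combined bilinear estimate, modelled on the analysis of Kowalski--Lin--Michel \cite{KLM}, together with its optimisation, that is expected to produce the numerical improvement $3/305$ over the classical exponent $3/5$.
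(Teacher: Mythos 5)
Your overall architecture is the same as the paper's: pass to the dual sum with phase $e(4(nX)^{1/4})$, open $\lambda_{1\boxplus\Phi}=1\ast A_\Phi(1,\cdot)$ into dyadic bilinear sums, treat a ``long smooth variable'' regime by Poisson summation, a middle regime by Cauchy--Schwarz plus Poisson with the Rankin--Selberg mean value, and a ``long $\GL(3)$ variable'' regime by the Voronoi-based twisted-sum bound (the paper invokes Lemma \ref{lemma:3}, i.e.\ \cite[Theorem 9]{huang2021rankin}, as a black box rather than redoing the Voronoi analysis), then optimize the crossovers. This is indeed the Kowalski--Lin--Michel-inspired improvement, and in that respect your plan is faithful.

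However, your account of where GRC enters is wrong, and this is a genuine gap rather than a presentational difference. The Rankin--Selberg mean-square bound \eqref{eqn:RS3} is \emph{unconditional} for every Hecke--Maass cusp form on $\SL(3,\mathbb{Z})$; consequently none of the bilinear estimates (neither your Cauchy--Schwarz-in-$b$ correlation sums, after $|A(1,a_1)\overline{A(1,a_2)}|\le\frac12(|A(1,a_1)|^2+|A(1,a_2)|^2)$, nor the paper's Cauchy--Schwarz in the $\GL(3)$ variable) requires GRC, and your claim that self-duality of $\Sym^2\phi$ is what supplies the mean-square bound is vacuous. Where GRC is actually needed is in the step you pass over: replacing the sharp cutoff by a smooth one at scale $Y=X^{3/5-\delta}$. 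The edge intervals of length $Y$ contribute $\sum_{X<n\le X+Y}|\lambda_{1\boxplus\Phi}(n)|$, and unconditionally Cauchy--Schwarz with \eqref{eqn:RS3} only gives $Y^{1/2}X^{1/2+\varepsilon}\asymp X^{4/5+\varepsilon}$, far above the target error $X^{3/5-\delta}$; pointwise GRC gives $\ll YX^{\varepsilon}$. For $\Phi=\Sym^2\phi$ the paper removes GRC not by any coefficient estimate but by \emph{positivity}: $\lambda_{1\boxplus\Sym^2\phi}(n)=\lambda_{\phi\times\phi}(n)\ge0$, so one sandwiches the sharp cutoff between the majorant $W_1$ and minorant $W_2$ and proves the smoothed asymptotic \eqref{eqn:smoothed} for both. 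As written, your proposal would fail to establish the second claim of the theorem, and would incorrectly invoke GRC in ranges where it is not needed.

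Two smaller points. First, a truncated Voronoi series with sharp cutoff and length $n\ll X^{1+\varepsilon}$ is delicate to justify at this error level; the paper instead bounds the first moment $\mathcal{I}=\int V(t/T)L(1/2+it,1\boxplus\Phi)X^{it}\,\dd t$ for $T\ll X^{1+\varepsilon}/Y$, which caps the dual length at $N\asymp T^4/X$ and, crucially, supplies the complementary bound $\mathcal{I}\ll T^{5/4+\varepsilon}$ (Proposition \ref{prop:I}(i)) for the range $T\le X^{2/5-4\delta}$ that your sketch does not address. Second, the exponent $3/305$ is asserted rather than derived: the substance lies in the quantitative optimization (the thresholds $L_0=T^{14/15}X^{-1/3}$, $\eta'=1/5$, $\eta=1/15$, and the window $X^{5/14+\varepsilon}\le T\le X^{5/12-\varepsilon}$), without which the claimed numerical value cannot be verified from your outline.
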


Our  method to prove Theorem \ref{thm:1+3} can be used to improve other asymptotic formulas in this shape. For example, one can improve Huang--Sun--Zhang \cite[Cor. 1.6]{HSZ} and Zhang \cite{Zhang}, which may appear in Zhang's thesis.

\medskip

The proof of Theorem \ref{thm:RS} by using Theorem \ref{thm:1+3} is the same as \cite[\S4]{huang2021rankin}.

\medskip
The plan of this paper is as follows.
In \S\ref{sec:preliminaries}, we give a review of Maass forms and $L$-functions, the analytic twisted sums of $\GL(3)$ coefficients, and the stationary phase method.
In \S\ref{sec:proof}, we prove Theorem \ref{thm:1+3}.

\medskip
\textbf{Notation.}
Throughout the paper, $\varepsilon$ is an arbitrarily small positive number;
all of them may be different at each occurrence.
The weight functions $U, \ V,\ W$ may also change at each occurrence.
As usual, $e(x)=e^{2\pi i x}$.

\section{Preliminaries}\label{sec:preliminaries}

\subsection{Maass forms and $L$-functions} \label{subsec:L-functions}

Let $\Phi$ be a Hecke--Maass form of type $(\nu_1,\nu_2)$ for $\SL(3,\mathbb{Z})$ with the normalized Fourier coefficients $A(m,n)$ such that $A(1,1)=1$.
The Langlands parameters are defined as $\alpha_1 = -\nu_1 - 2\nu_2 +1$,
$\alpha_2 = -\nu_1 +\nu_2$, and $\alpha_3 = 2\nu_1 +\nu_2 -1$.
%The Ramanujan--Selberg conjecture predicts that $\Re(\alpha_i) = 0$.
%From the work of Jacquet and Shalika \cite{JacquetShalika}, we know (at least) that $|\Re(\alpha_i)| < 1/2$.
It is well known that by standard properties of the Rankin--Selberg $L$-function we have the Ramanujan conjecture on average
\begin{equation}\label{eqn:RS3}
  \mathop{\sum_{m\geq1}\sum_{n\geq1}}_{m^2 n \leq N} |A(m,n)|^2
  \ll_\Phi N^{1+\varepsilon}.
\end{equation}
The $L$-function associated with $\Phi$ is given by
$L(s,\Phi) = \sum_{n=1}^{\infty} A(1,n) n^{-s}$ in the domain $\Re(s)>1$. It extends to an entire function and satisfies the following functional equation
\[
  \gamma(s,\Phi) L(s,\Phi) = \gamma(1-s,\tilde\Phi) L(1-s,\tilde{\Phi}), \quad \gamma(s,\Phi) = \prod_{j=1}^{3} \pi^{-s/2} \Gamma\left(\frac{s-\alpha_j}{2}\right).
\]
Here $\tilde\Phi$ is the dual form having Langlands parameters $(-\alpha_3,-\alpha_2,-\alpha_1)$ and the Fourier coefficients $\overline{A(m,n)}$. See more information in Goldfeld \cite{goldfeld2006automorphic}.

\subsection{Analytic twisted sums of $\GL(3)$ coefficients}\label{subsec:AT3}

In order to prove Theorem  \ref{thm:1+3}, we will use power saving upper bounds for the analytic twisted sum of $\GL(3)$ Fourier coefficients.
Define
\[
  \mathscr{S}(N) := \sum_{n\geq1} A_\Phi(1,n) e\left(T \varphi\left(\frac{n}{N}\right)\right) V\left(\frac{n}{N}\right),
\]
where $T\geq1$ is a large parameter, $\varphi$ is some fixed real-valued smooth function, and $V\in C_c^\infty (\mathbb{R})$ with $\supp V\subset [1/2,1]$, total variation $\mathrm{Var}(V)\ll 1$ and satisfying that $V^{(k)} \ll P^k$ for all $k\geq0$ with $P\ll T^{\eta}$ for some small $\eta\in [0,1/10]$.

\begin{lemma}\label{lemma:3}
  Assume $\varphi(u)=u^\beta$ with $\beta\in (0,1)$.
  Then we have
  \[
    \mathscr S(N) \ll_{\Phi,\varepsilon} T^{3/10} N^{3/4+\varepsilon},
  \]
  if $T^{6/5} \leq N \leq T^{8/5-\varepsilon}$.
\end{lemma}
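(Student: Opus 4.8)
The plan is to dualize the sum by $\GL(3)$ Voronoi summation and then to extract cancellation from the resulting sum of Fourier coefficients against an explicit oscillatory phase. Since $V$ is essentially flat (its derivatives cost only $P\ll T^\eta$), the only trivial input is $\mathscr S(N)\ll_\Phi N^{1+\varepsilon}$ from \eqref{eqn:RS3}, so I must gain a genuine power. First I would apply the Voronoi summation formula for $\GL(3)$ with trivial modulus, which turns $\sum_n A_\Phi(1,n)(\cdots)$ into a dual sum $\sum_m A_\Phi(m,1)\,\Psi^\pm(m)$, where $\Psi^\pm$ is the integral transform of $e(T\varphi(\cdot/N))V(\cdot/N)$ against the Hankel-type kernel attached to $\Phi$. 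Using the stationary phase analysis recalled in \S\ref{sec:preliminaries}, the kernel contributes an oscillation of shape $e(\mp 3(mx)^{1/3})$, so the total phase is $T(x/N)^\beta\mp 3(mx)^{1/3}$; only one sign admits a stationary point, and it localizes the dual variable to $m\asymp M:=T^3/N$. On this range the transform has size $\Psi^\pm(m)\asymp NT^{-3/2}$ and carries an explicit phase $\psi(m)$ of the form $c\,(Nm)^{1/3}$, with $\psi(m)\asymp T$ and $\psi'(m)\asymp N/T^2$.

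This reduces matters to the dual sum $D:=\sum_{m\asymp M}A_\Phi(m,1)\,e(\psi(m))$, since $\mathscr S(N)\ll NT^{-3/2}\,|D|$, and an elementary computation shows the target is equivalent to
\[
  D \ll_{\Phi,\varepsilon} T^{9/5}N^{-1/4+\varepsilon}=T^{21/20}M^{1/4+\varepsilon}.
\]
At the top of the allowed range, where $N=T^{8/5}$ and $M=T^{7/5}$, no cancellation is needed: Cauchy--Schwarz together with the Rankin--Selberg average \eqref{eqn:RS3} gives $\sum_{m\asymp M}|A_\Phi(m,1)|\ll M^{1+\varepsilon}$, whence $\mathscr S(N)\ll T^{3/2+\varepsilon}$, matching the claim at that endpoint. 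For smaller $N$ one must beat this trivial estimate for $D$ by up to a factor $T^{3/10}$, and this is precisely the saving demanded by the window $T^{6/5}\le N\le T^{8/5}$.

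To produce this saving I would exploit the bilinear structure of $D$ rather than the theory of exponential pairs, in the spirit of Kowalski--Lin--Michel \cite{KLM}. A second Voronoi merely inverts the first, so instead I would bound $|D|^2$ by expanding the square (after a Cauchy--Schwarz to insert a smooth partition), reaching a diagonal term together with a family of correlation sums
\[
  \sum_{m} A_\Phi(m,1)\,\overline{A_\Phi(m+h,1)}\,e\!\left(\psi(m)-\psi(m+h)\right)
\]
over small shifts $h\neq0$. The diagonal $h=0$ is $\ll M^{1+\varepsilon}$ by \eqref{eqn:RS3} and is what pins down the main contribution, while each off-diagonal term must be controlled through the oscillation of $\psi(m)-\psi(m+h)$, whose frequency is $\asymp hN/T^2$, again weighted against \eqref{eqn:RS3}.

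The main obstacle is exactly this off-diagonal: estimating the $\GL(3)\times\GL(3)$ correlation sums uniformly in the shift $h$, \emph{without} recourse to exponent pairs, and balancing them against the diagonal. I expect the decisive point to be that the constraints $N\ge T^{6/5}$ and $N\le T^{8/5}$ are precisely what force the diagonal to dominate while keeping the off-diagonal admissible after one further stationary phase evaluation, so that the exponent $3/10$ in $T$ (and hence $3/305$ in Theorem \ref{thm:RS}) emerges from this balance. The mild oscillation of $V$, controlled by $P\ll T^{\eta}$, costs only $T^{\varepsilon}$ throughout and does not affect the exponents.
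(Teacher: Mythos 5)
Your opening is essentially correct and coincides with the first steps of the argument the paper relies on: the paper does not reprove this lemma at all, but quotes it as \cite[Theorem 9]{huang2021rankin}, and the proof there likewise begins with $\GL(3)$ Voronoi summation at trivial modulus plus stationary phase, localizing the dual variable to $m\asymp M:=T^3/N$ with a monomial-type phase $\psi$ satisfying $\psi(m)\asymp T$, $\psi'(m)\asymp T/M = N/T^2$. Your bookkeeping is also right: the claim is equivalent to $D:=\sum_{m\asymp M}A_\Phi(m,1)e(\psi(m))\ll T^{21/20}M^{1/4+\varepsilon}$, which is trivial (via \eqref{eqn:RS3} and Cauchy--Schwarz) exactly at the endpoint $N=T^{8/5-\varepsilon}$, and a saving of up to $T^{3/10}$ is needed as $N$ decreases to $T^{6/5}$.

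However, your mechanism for producing that saving has a genuine gap, and in fact cannot work as described, for two concrete reasons. First, quantitatively: in the admissible range one has $M\in[T^{7/5},T^{9/5}]$, so $\psi'(m)\asymp T/M\le T^{-2/5}\ll1$, i.e.\ the dual phase is already \emph{slowly varying}. After Cauchy--Schwarz/Weyl differencing with shifts $|h|\le H$, the diagonal contributes $M^2/H$, which meets the target only when $H\gtrsim T^{3/5}$ (at $N=T^{6/5}$, where the full saving $T^{3/10}$ is required). But then the differenced phase $\psi(m)-\psi(m+h)\approx -h\psi'(m)$ varies over the whole range $m\asymp M$ by at most $HT/M\le T^{3/5-4/5}=T^{-1/5}$: it is flat, and there is no oscillation left to exploit in any off-diagonal term --- differencing destroys a phase whose derivative is already $\ll1$ (the natural move for such a phase, Poisson summation in $m$, merely inverts the Voronoi step and returns the original sum). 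Second, structurally: even if some oscillation survived, the off-diagonal terms are shifted convolutions $\sum_m A_\Phi(m,1)\overline{A_\Phi(m+h,1)}(\cdots)$, for which no nontrivial bound is known --- the $\GL(3)$ shifted convolution problem is open --- and estimating them via \eqref{eqn:RS3} gives $\ll M^{1+\varepsilon}$ per shift, hence $|D|^2\ll M^2$, the trivial bound. So your hope that the window $T^{6/5}\le N\le T^{8/5-\varepsilon}$ ``forces the diagonal to dominate'' is not realizable. The cited proof avoids this trap: there the Cauchy--Schwarz is arranged so that the coefficients are absorbed by \eqref{eqn:RS3} while the oscillation is confronted in a \emph{coefficient-free} object (keeping the integral transform unevaluated and squaring it), to which Poisson summation and stationary phase apply again. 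Note finally that the ``bilinear structure'' of \cite{KLM} invoked in this paper lives in the two-variable sums $\mathcal B(L,M)$ of \S\ref{sec:proof}, where the $\zeta$-variable $\ell$ and the coefficient variable $m$ are independent; the dual sum $D$ inside Lemma \ref{lemma:3} has a single free variable carrying both the coefficient and the phase, and hence no bilinear structure to exploit.
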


\begin{proof}
  This is \cite[Theorem 9]{huang2021rankin}.
\end{proof}

\subsection{Oscillatory integrals}

Let $\mathcal{F}$ be an index set and $X=X_T:\mathcal{F}\rightarrow \mathbb{R}_{\geq1}$ be a function of $T\in\mathcal{F}$. A family of $\{w_T\}_{T\in\mathcal{F}}$ of smooth functions supported on a product of dyadic intervals in $\mathbb{R}_{>0}$ is called $X$-inert if for each $j\in \mathbb{Z}_{\geq0}$ we have
\[
  \sup_{T\in\mathcal{F}} \sup_{x \in \mathbb{R}_{>0}}
  X_T^{-j} \left| x^{j} w_T^{(j)} (x) \right|
   \ll_{j} 1.
\]
We will use the following stationary phase lemma. In fact, we only need upper bounds.

\begin{lemma}\label{lemma:stationary_phase}
  Suppose $w=w_T$ is a family of $X$-inert in $\xi$ with compact support on $[Z,2Z]$,
  so that $w^{(j)}(\xi) \ll (Z/X)^{-j}$.
  Suppose that on the support of $w$, $h=h_T$ is smooth and satisfies that
  $ h^{(j)}(\xi) \ll \frac{Y}{Z^j}$, for all $j\geq0.$
  Let
  \[
    I = \int_{\mathbb{R}} w(\xi) e^{i h(\xi)}  \dd \xi.
  \]
  \begin{enumerate}%[label=(\roman*)]
  \item [(i)] If $h'(\xi) \gg \frac{Y}{Z}$ for all $\xi \in \supp w$. Suppose $Y/X \geq1$. Then
      $I \ll_A Z (Y/X)^{-A}$ for $A$ arbitrarily large.
  \item [(ii)] If $h''(\xi) \gg \frac{Y}{Z^2}$
  for all $\xi \in \supp w$, and there exists $\xi_0 \in\mathbb{R}$ such that $h'(\xi_0)=0$.
  Suppose that $Y/X^2 \geq 1$. Then we have
  \[
    I
    = \frac{e^{i h(\xi_0)}}{\sqrt{h''(\xi_0)}}  W_T(\xi_0) + O_A(Z(Y/X^2)^{-A}),
    \quad
    \textrm{for any $A>0$,}
  \]
  for some $X$-inert family of functions $W_T$ (depending on $A$) supported on $\xi_0\asymp Z$.
  \end{enumerate}
\end{lemma}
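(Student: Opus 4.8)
The plan is to establish the two parts by the non-stationary and stationary phase principles respectively, carried out so that all implied constants stay uniform over the inert family. Part (i) is pure integration by parts. Part (ii) I would reduce, by rescaling and an inert partition of unity, to a single Gaussian integral near $\xi_0$, which is then evaluated exactly by Plancherel.

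\emph{Part (i).} Since $h'\gg Y/Z$ has no zero on $\supp w$, I would integrate by parts via $e^{ih}=(ih')^{-1}(e^{ih})'$, with no boundary terms because $w\in C_c^\infty$. This turns the amplitude $w$ into $w_1:=-\frac{\dd}{\dd\xi}\!\big(w/(ih')\big)=-w'/(ih')+wh''/(i(h')^2)$. The inert bounds $w^{(j)}\ll (X/Z)^j$ together with $h'\gg Y/Z$ and $h^{(j)}\ll Y/Z^j$ give $|w_1|\ll X/Y$, and in fact more is true: $(Y/X)\,w_1$ is again supported on $[Z,2Z]$ and $X$-inert of the same shape. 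Iterating this operator $A$ times and using the hypothesis $Y/X\ge 1$ produces an amplitude of size $\ll (Y/X)^{-A}$, so that $I\ll Z(Y/X)^{-A}$. The only point needing care is that the inert class is reproduced at each step, which is immediate from the product and quotient rules and the bounds $h^{(j)}\ll Y/Z^j$ for all $j$.

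\emph{Part (ii).} First rescale $\xi=Zx$: the amplitude $\tilde w(x):=w(Zx)$ is $X$-inert on $x\asymp 1$, the phase $H(x):=h(Zx)$ satisfies $H^{(j)}\ll Y$ and $H''\gg Y$, and $x_0:=\xi_0/Z$ is the stationary point. Writing $I=Z\!\int \tilde w(x)e^{iH(x)}\dd x$, it suffices to prove the normalized expansion with the single large parameter $Y$ under the hypothesis $Y/X^2\ge1$; un-rescaling then restores the factor $1/\sqrt{h''(\xi_0)}=Z/\sqrt{H''(x_0)}$ and the overall $Z$. Using an $X$-inert partition of unity I would discard the range $|x-x_0|\gg (X/\sqrt{Y})\,Y^{\varepsilon}$, where $|H'(x)|\gg Y|x-x_0|$ is large enough that part (i) gives a negligible contribution; note $X/\sqrt{Y}=(Y/X^2)^{-1/2}\le1$, so this is a genuinely shrinking window that still contains the stationary-phase core of width $\asymp Y^{-1/2}$. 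On the remaining piece I would flatten the phase by the Morse substitution $u=\psi(x)$ with $\psi(x_0)=0$, $\psi'(x_0)=1$ and $H(x)=H(x_0)+\tfrac12 H''(x_0)u^2$; the transformed amplitude $g(u)=\tilde w(\psi^{-1}(u))\,(\psi^{-1})'(u)$ remains $X$-inert. The integral becomes $e^{iH(x_0)}\int g(u)e^{\frac{i}{2}cu^2}\dd u$ with $c=H''(x_0)\asymp Y$, and Plancherel gives
\[
  \int g(u)e^{\frac{i}{2}cu^2}\dd u
  = \frac{e^{i\sgn(c)\pi/4}}{\sqrt{|c|/2\pi}}\int \hat g(v)\,e^{-iv^2/(2c)}\dd v .
\]
Expanding $e^{-iv^2/(2c)}$ into a Taylor polynomial of degree $2A$ and integrating termwise yields the leading term proportional to $c^{-1/2}g(0)=c^{-1/2}\tilde w(x_0)$ together with corrections; by inertness $\int v^{2k}|\hat g(v)|\dd v\ll X^{2k}$, so the $k$-th correction is smaller by $c^{-k}X^{2k}\asymp (Y/X^2)^{-k}$. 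Collecting the finitely many terms into a single $X$-inert function $W_T$ supported on $\xi_0\asymp Z$ (depending on $A$) and bounding the Taylor remainder produces exactly the stated main term with error $O_A(Z(Y/X^2)^{-A})$.

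The main obstacle is the nonlinear Morse substitution in part (ii): one must check that flattening the phase preserves the $X$-inert class of the amplitude \emph{with uniform constants}, which requires controlling all derivatives of $\psi$ and $\psi^{-1}$ in terms of the derivative bounds on $h$ and the lower bound $h''\gg Y/Z^2$. Once this inert bookkeeping is in place, the rest---the integration by parts of part (i) and the Plancherel evaluation of the Gaussian---is routine. This lemma is by now standard in the literature on inert oscillatory integrals, so in the paper it is simply quoted.
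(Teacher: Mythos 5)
Your proposal is correct in outline, but be aware that the paper contains no proof of this lemma at all: its ``proof'' is the one-line citation to Blomer--Khan--Young \cite[\S 8]{BlomerKhanYoung} (the uniform ``inert'' formulation used here is the one worked out in full by K\i{}ral--Petrow--Young), and what you have written is essentially a reconstruction of that standard argument --- iterated integration by parts for (i), and rescaling, localization, a Morse-type quadratic substitution, and a Plancherel/Gaussian evaluation for (ii). Your part (i) is complete as stated: the key observation that $(Y/X)w_1$ lies in the same inert class is exactly how the induction is run in the cited source. In part (ii) two points need repair, though both are routine. First, the hypothesis gives $h''\gg Y/Z^2$ only on $\supp w$, while $\xi_0$ is merely assumed to exist somewhere in $\mathbb{R}$; your inequality $|H'(x)|\gg Y|x-x_0|$ uses the lower bound on $H''$ along the entire segment joining $x$ to $x_0$, so you must first dispose of the case where $\xi_0$ lies far from $[Z,2Z]$ (there part (i) applies and no main term is needed, consistent with $W_T$ being supported on $\xi_0\asymp Z$) before localizing. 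Second, your moment bound $\int v^{2k}|\hat g(v)|\,\dd v\ll X^{2k}$ is off by a factor of $X$: rapid decay of $\hat g$ only yields $\ll X^{2k+1}$. For the main terms this is harmless because termwise you should instead use Fourier inversion, $\int \hat g(v)\,v^{2k}\,\dd v = 2\pi(-1)^k g^{(2k)}(0)\ll X^{2k}$, and for the Taylor remainder you should truncate at $|v|\le X(Y/X^2)^{\varepsilon}$ using the rapid decay of $\hat g$; the stray factor is then absorbed by running the argument with $A$ replaced by a larger constant, which the statement permits. Finally, the genuinely delicate point --- which you correctly flag --- is not only the inert bookkeeping for $\psi$ and $\psi^{-1}$ but the verification that the finitely many expansion coefficients assemble into a family $W_T$ that is $X$-inert \emph{as a function of $\xi_0$}, i.e.\ with uniform control when $\xi_0$ varies with the family parameters; this is precisely the ``uniformity in parameters'' content of the K\i{}ral--Petrow--Young write-up, and any self-contained proof must carry out that differentiation explicitly rather than treat $\xi_0$ as fixed.
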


%Stationary phase with parameters
%\begin{lemma}\label{lemma:stationary_phase}
%  Suppose $w_T$ is $X$-inert in $t_1,\ldots,t_d$, supported on $t_1 \asymp Z$ and $t_i\asymp X_i$ for $i=2,\ldots,d$. Suppose that on the support of $w_T$, $h=h_T$ satisfies that
%  \[
%    \frac{\partial^{a_1+a_2+\cdots +a_d}}{\partial t_1^{a_1}\cdots \partial t_d^{a_d}} h(t_1,t_2,\ldots,t_d) \ll_{a_1,\ldots,a_d}  \frac{Y}{Z^{a_1}} \frac{1}{X_2^{a_2}\cdots X_d^{a_d}},
%  \]
%  for all $a_1,\ldots,a_d\in \mathbb{N}$. Let
%  \[
%    I = \int_{\mathbb{R}} w_T(t_1,t_2,\ldots,t_d) e^{i h(t_1,t_2,\ldots,t_d)}  \dd t_1.
%  \]
%  \begin{enumerate}%[label=(\roman*)]
%  \item [(i)] Suppose $\frac{\partial}{\partial t_1} h(t_1,t_2,\ldots,t_d) \gg \frac{Y}{Z}$ for all $(t_1,t_2,\ldots,t_d)\in \supp w_T$. Suppose $Y/X \geq1$. Then
%      $I \ll_A Z (Y/X)^{-A}$ for $A$ arbitrarily large.
%  %
%  \item [(ii)] Suppose $\frac{\partial^{2}}{\partial t_1^{2}} h(t_1,t_2,\ldots,t_d) \gg \frac{Y}{Z^2}$
%  for all $(t_1,t_2,\ldots,t_d)\in \supp w_T$, and there exists $t_0 \in\mathbb{R}$ such that $ \frac{\partial}{\partial t_1} h(t_0,t_2,\ldots,t_d)=0$.
%  Suppose that $Y/X^2 \geq R \geq 1$. Then
%  \[
%    I
%    = \frac{Z}{\sqrt{Y}} e^{i h(t_0,t_2,\ldots,t_d)} W_T(t_2,\ldots,t_d) + O_A(ZR^{-A}),
%  \]
%  for some $X$-inert family of functions $W_T$ and any $A>0$.
%  \end{enumerate}
%\end{lemma}

\begin{proof}
  See \cite[\S 8]{BlomerKhanYoung}.
\end{proof}

\section{Proof of Theorem \ref{thm:1+3} }\label{sec:proof}

Let $0<Y\leq X/5$. Let $W_1$ be a smooth function with support $\supp W_1 \in [1/2-Y/X,1+Y/X]$ such that $W_1(u) = 1$ if $u\in[1/2,1]$, $W_1(u)\in[0,1]$ if $u\in[1/2-Y/X,1/2]\cup [1,1+Y/X]$.
Similarly, let $W_2$ be a smooth function with support $\supp W_2 \in [1/2,1]$ such that $W_2(u) = 1$ if $u\in[1/2+Y/X,1-Y/X]$, $W_2(u)\in[0,1]$ if $u\in[1/2,1/2+Y/X]\cup [1-Y/X,1]$. Assume  $W_j^{(k)}(u) \ll (X/Y)^{k}$ for any integer $k\geq0$ and $j\in\{1,2\}$.
By the same argument as in \cite[\S3.1]{huang2021rankin}, to prove the first claim in Theorem \ref{thm:1+3}, it suffices to show for $W=W_1$ and $Y=X^{3/5-\delta}$ with  $\delta=3/305$,  we have
\begin{equation}\label{eqn:smoothed}
  \sum_{n\geq1} \lambda_{1\boxplus\Phi}(n) W\left(\frac{n}{X}\right) = L(1,\Phi) \tilde{W}(1) X  +  O(X^{3/5-\delta+o(1)}),
\end{equation}
where $\tilde{W}(s)$ %$\tilde{W}(s) = \int_{0}^{\infty} W(u) u^{s-1} \dd u$
is the Mellin transform of $W$.
Let $\phi$ be a Hecke--Maass cusp form for $\SL(2,\mathbb Z)$. Consider the case $\Phi=\Sym^2 \phi$.
%Note that $\lambda_{1\boxplus\Sym^2\phi}(n)\geq0$.
%We have
%\begin{multline*}
%  \sum_{n\geq1} \lambda_{1\boxplus\Sym^2\phi}(n) W_{2}\left(\frac{n}{X}\right) \leq \mathcal A(X,1\boxplus\Sym^2\phi)-\mathcal A(X/2,1\boxplus\Sym^2\phi)
%   \\
%  \leq \sum_{n\geq1} \lambda_{1\boxplus\Sym^2\phi}(n) W_{1}\left(\frac{n}{X}\right).
%\end{multline*}
To prove the second claim in Theorem \ref{thm:1+3}, it suffices to prove \eqref{eqn:smoothed} for any $W\in\{W_1,W_2\}$.
Note that $\tilde{W}(1)=1/2+O(Y/X)$.

In \cite[Eq. (3.3)]{huang2021rankin}, we prove
\begin{multline}\label{eqn:sum2integral}
  \sum_{n\geq1} \lambda_{1\boxplus\Phi}(n) W\left(\frac{n}{X}\right)
  = L(1,\Phi) X/2 + O(Y) \\
   + O\bigg( \sup_{u\in [1/3,2]} \sup_{T\ll X^{1+\varepsilon}/Y } \frac{X^{1/2}}{T} \left| \int_{\mathbb{R}} V\left(\frac{t}{T}\right) L(1/2+it,1\boxplus\Phi) (uX)^{it} \dd t \right| \bigg),
\end{multline}
for some fixed $V$ with compact support.
Here $L(s,1\boxplus\Phi)=\zeta(s)L(s,\Phi)$ and $\zeta(s)$ is the Riemann zeta function.
Hence it suffices to consider
\[
  \mathcal{I} := \mathcal{I}_\Phi(T,X)=\int_{\mathbb{R}} V\left(\frac{t}{T}\right) L(1/2+it,1\boxplus\Phi) X^{it} \dd t.
\]
We only consider the case $T\geq1$, since the case $T\leq -1$ can be done similarly and the case $-1\leq T\leq 1$ can be treated trivially. We have the following proposition.
\begin{proposition}\label{prop:I}
  Let $\Phi$ be a Hecke--Maass cusp form for $\SL_3(\mathbb{Z})$.
  We have
  \begin{itemize}
    \item [(i)] For any $X>0$ and $T\geq1$, we have $\mathcal{I} \ll T^{5/4+\varepsilon}$.
%    \item [(ii)] If $X^{\varepsilon} \leq T \leq X^{2/5}$, then we have
%        $ \mathcal{I} \ll T^{5/2+\varepsilon}X^{-1/2} + T^{1+\varepsilon}$.
    \item [(ii)] Assume  $X$ is sufficiently large.
  If $X^{5/14+\varepsilon} \leq T \leq X^{5/12-\varepsilon}$, then we have
  \[
    \mathcal{I} \ll T^{61/30+\varepsilon} X^{-1/3}.
  \]
  \end{itemize}
\end{proposition}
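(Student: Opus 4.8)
Since $|X^{it}|=1$, this bound is uniform in $X$ and reduces to mean values. Writing $L(s,1\boxplus\Phi)=\zeta(s)L(s,\Phi)$ and applying Cauchy--Schwarz to the $t$-integral over the support $t\asymp T$ of $V(t/T)$,
\[
  |\mathcal{I}|\ll\Big(\int_{t\asymp T}|\zeta(\tfrac12+it)|^2\dd t\Big)^{1/2}\Big(\int_{t\asymp T}|L(\tfrac12+it,\Phi)|^2\dd t\Big)^{1/2}.
\]
The first factor is $\ll T^{1/2+\varepsilon}$ by the classical second moment of $\zeta$. For the second, the approximate functional equation represents $L(\tfrac12+it,\Phi)$ by Dirichlet polynomials of length $\asymp t^{3/2}\asymp T^{3/2}$ in the coefficients $A_\Phi(1,n)$; the mean value theorem for Dirichlet polynomials together with \eqref{eqn:RS3} gives $\int_{t\asymp T}|L(\tfrac12+it,\Phi)|^2\dd t\ll(T+T^{3/2})T^\varepsilon\ll T^{3/2+\varepsilon}$. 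Hence $\mathcal{I}\ll T^{1/2+\varepsilon}\,T^{3/4+\varepsilon}=T^{5/4+\varepsilon}$.

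\textbf{Part (ii), set-up.} Now $X$ is large and I would open the $L$-value by its functional equation. Insert the approximate functional equation for the degree-$4$ $L$-function $L(s,1\boxplus\Phi)$, whose analytic conductor at height $t$ is $\asymp t^4$, so both dual sums have length $\asymp t^2\asymp T^2$. Since $T\leq X^{5/12-\varepsilon}$ forces $X\geq T^{12/5}$, in particular $X>T^2$, the principal sum, after integrating in $t$, localizes at $n\asymp X$ beyond the cutoff $n\ll T^2$; by repeated integration by parts (Lemma \ref{lemma:stationary_phase}(i)) it is negligible. Only the dual sum survives,
\[
  \mathcal{I}\approx\sum_{n\ll T^2}\frac{\overline{\lambda_{1\boxplus\Phi}(n)}}{n^{1/2}}\int_{\mathbb{R}}V\Big(\frac tT\Big)G(t)\,(Xn)^{it}\dd t,
\]
where $G$ has modulus one with $\arg G(t)\approx-4t\log t+ct$ (the product of the four gamma ratios).

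\textbf{Part (ii), stationary phase and reduction to Lemma \ref{lemma:3}.} The phase $\Psi(t)=t\log(Xn)-4t\log t+ct$ has a stationary point $t_0\asymp(Xn)^{1/4}$ with $\Psi(t_0)\asymp(Xn)^{1/4}$ and $|\Psi''(t_0)|\asymp t_0^{-1}$; it lies in $t\asymp T$ exactly when $n\asymp T^4/X=:N_0$, and one checks $N_0\in[T^{6/5},T^{8/5}]$ precisely for $X^{5/14}\leq T\leq X^{5/12}$, which is the stated window. Evaluating the integral by Lemma \ref{lemma:stationary_phase}(ii) turns $\mathcal{I}$ into
\[
  \mathcal{I}\approx X^{1/8}\sum_{n\asymp N_0}\overline{\lambda_{1\boxplus\Phi}(n)}\,n^{-3/8}\,e\big(c'X^{1/4}n^{1/4}\big)\,W\big(n/N_0\big),
\]
an exponential sum of the shape in Lemma \ref{lemma:3} but with degree-$4$ coefficients. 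To invoke Lemma \ref{lemma:3} I would factor $\lambda_{1\boxplus\Phi}(n)=\sum_{m\ell=n}A_\Phi(1,\ell)$, separating the divisor variable $m$ (from $\zeta$) from the $\GL(3)$ variable $\ell$. For fixed $m$ the inner $\ell$-sum has phase $c'X^{1/4}m^{1/4}\ell^{1/4}$, i.e. $\varphi(u)=u^{1/4}$ with $\beta=1/4$, at scale $\hat N=N_0/m$ and with $\hat T\asymp T$; the relevant weights are inert, so it is exactly $\mathscr{S}(\hat N)$ for the dual form $\tilde\Phi$, bounded by $T^{3/10}\hat N^{3/4+\varepsilon}$ when $T^{6/5}\leq\hat N\leq T^{8/5-\varepsilon}$.

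\textbf{The main obstacle} is the sum over the divisor variable $m$, which ranges up to $\asymp T^{14/5}/X\leq T^{2/5}$. Applying Lemma \ref{lemma:3} for each $m$ and summing trivially against the weight $m^{-3/4}$ gives only $\mathcal{I}\ll T^{5/2+\varepsilon}X^{-1/2}$; this is sharp at the lower endpoint $T=X^{5/14}$ (where only $m=1$ occurs) but lossy toward the upper endpoint. Reaching $T^{61/30+\varepsilon}X^{-1/3}$ requires genuine cancellation in the bilinear sum in $m$ and $\ell$ jointly---treating the short $\zeta$-variable $m$ as the smooth variable, applying Cauchy--Schwarz, and using both the oscillation of $e\big(c'X^{1/4}(m\ell)^{1/4}\big)$ in the off-diagonal and the mean square of $A_\Phi(1,\ell)$ (cf. \eqref{eqn:RS3})---in place of estimating each $\ell$-sum separately. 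Carrying out this bilinear estimate, and treating the short ranges $\hat N<T^{6/5}$ by the trivial bound, is the delicate heart of the argument and is where the exponent $61/30$ is produced.
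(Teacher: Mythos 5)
Your part (i) is fine and is essentially the standard argument the paper invokes (Cauchy--Schwarz between the second moments of $\zeta(1/2+it)$ and of $L(1/2+it,\Phi)$), and your part (ii) set-up correctly reproduces the paper's reduction: after the functional equation and stationary phase, $\mathcal I$ is controlled by $T^{1/2}N^{-1/2}|\mathcal B(N)|$ with $N\asymp T^4/X$ and phase $e(-4(nX)^{1/4})$, the window $X^{5/14+\varepsilon}\leq T\leq X^{5/12-\varepsilon}$ corresponds exactly to $T^{6/5}\leq N\leq T^{8/5-\varepsilon}$, and the factorization $\lambda_{1\boxplus\Phi}(n)=\sum_{\ell m=n}A(1,m)$ with Lemma \ref{lemma:3} applied to the $\GL(3)$ variable is precisely the paper's ``small $L$'' regime. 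But your proof stops exactly where the proposition lives: you yourself flag that the bilinear estimate producing $61/30$ is ``the delicate heart'' and you do not carry it out, so the claimed bound is not established. This is the content of the paper's Proposition \ref{prop:B}, and it is not a routine completion.

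Two concrete assertions in your final paragraph are also wrong. First, the divisor variable runs over all of $m\ll N\asymp T^4/X$, not merely up to $T^{14/5}/X$; the latter is only the range in which $\hat N=N/m\geq T^{6/5}$ makes Lemma \ref{lemma:3} applicable. Second, the complementary range cannot be ``treated by the trivial bound'': trivially it contributes $\asymp N=T^4/X$ to $\mathcal B(N)$, i.e.\ $T^{5/2}X^{-1/2}$ to $\mathcal I$, which exceeds the target $T^{61/30}X^{-1/3}$ throughout the interior of the window (they coincide only at $T=X^{5/14}$, as your own trivial-summation computation shows). The paper instead splits the zeta-variable length $L$ into three regimes: for $L\geq T^{1/2+1/15}$, Poisson summation in $\ell$ plus the second derivative test gives $\mathcal B(L,M)\ll T^{1/2}M\ll T^{4-1/15}/X$; for $T^{14/15}X^{-1/3}\leq L\leq T^{1/2+1/5}$, Cauchy--Schwarz pulling out the $\GL(3)$ mean square via \eqref{eqn:RS3} (so the short coefficient-free sum is squared, and Poisson is applied in the long $\GL(3)$-indexed variable), with the zero frequency giving the diagonal term $L^{-1/2}T^4X^{-1}$ and the nonzero frequencies localized to $\ell-\ell'\asymp RML/T$ giving $L^{1/2}T^{9/4}X^{-1/2}$; and for $L\leq T^{14/15}X^{-1/3}$, Lemma \ref{lemma:3} gives $L^{1/4}T^{33/10}X^{-3/4}$. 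The exponent $61/30$ is produced not by the bilinear estimate alone but by balancing the small-$L$ and medium-$L$ (zero-frequency) bounds at $L_0=T^{14/15}X^{-1/3}$, yielding $\mathcal B(L,M)\ll T^{53/15+\varepsilon}X^{-5/6}$ and hence $\mathcal I\ll T^{61/30+\varepsilon}X^{-1/3}$; the lower endpoint $T\geq X^{5/14+\varepsilon}$ is needed so that $L_0\leq T^{14/5}/X$, and the choices $\eta'=1/5$, $\eta=1/15$ glue the three regimes together. Until you execute this (or an equivalent) range-splitting with the Poisson and Cauchy--Schwarz arguments, the proposal has a genuine gap at the main estimate.
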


We will prove this proposition in the next subsection.
Now we can complete the proof of Theorem \ref{thm:1+3}.

\begin{proof}[Proof of Theorem  \ref{thm:1+3} by assuming Proposition \ref{prop:I}]
  Assume $\delta\in(0,1/100)$. Then by Proposition \ref{prop:I} (ii), for $X^{2/5-4\delta} \leq T \leq % X^{1+\varepsilon}/Y =
   X^{2/5+\delta+\varepsilon}$, the contribution to \eqref{eqn:sum2integral} is bounded by
  \[
    O(X^{1/2}T^{-1}  T^{61/30+\varepsilon} X^{-1/3})
    = O(X^{1/6} T^{31/30+\varepsilon}) = O(X^{87/150+31\delta/30+\varepsilon}).
  \]
  Note that the other error term is $O(X^{3/5-\delta+\varepsilon})$, so the best choice is $\delta=3/305$.
  By Proposition \ref{prop:I} (i), for $T \leq X^{2/5-4\delta}$, the contribution to \eqref{eqn:sum2integral} is bounded by
  \[
    O(X^{1/2}T^{-1} T^{5/4+\varepsilon})
    = O(X^{1/2}T^{1/4+\varepsilon})  = O(X^{3/5-\delta+\varepsilon}).
  \]
  This proves Theorem  \ref{thm:1+3}.
\end{proof}

\subsection{The integral first moment}

Proposition \ref{prop:I} (i) is standard. See e.g. \cite[Prop. 19 (i)]{huang2021rankin}.

Proposition \ref{prop:I} (ii) is an improvement of  \cite[Prop. 19 (iii)]{huang2021rankin}.
The first several steps of the proof are the same.

For $T\geq X^{1/4+\varepsilon}$, by
\cite[Eq. (3.9)]{huang2021rankin}, we have
\begin{equation}\label{eqn:I<<sum}
  \mathcal{I} \ll  T^\varepsilon \sup_{w\in[\varepsilon-iT^\varepsilon,\varepsilon+iT^\varepsilon]}
  \sup_{N\asymp T^{4}/X} \sup_{|v|\leq T^\varepsilon}
   \frac{T^{1/2}}{ N^{1/2}}
  \Big| \sum_{n\geq1}  \lambda_{1\boxplus\Phi}(n) V\left(\frac{n}{N}\right) e\left(-4(nX)^{1/4}\right) \Big|
  +  T^{1+\varepsilon} .
\end{equation}
Here $V$  is certain smooth function (depending on $v$) such that $\supp V\subset (1/20,20)$ and $V^{(k)} \ll T^{k\varepsilon}$ for $k\geq0$.
To prove Proposition \ref{prop:I} (ii), we need to find a power saving in the dual sum
\[
  \mathcal B(N) := \sum_{n\geq1}  \lambda_{1\boxplus\Phi}(n) V\left(\frac{n}{N}\right) e\left(-4(nX)^{1/4}\right)
\]
if $X^{5/14+\varepsilon} \leq T \leq X^{5/12-\varepsilon}$.
Now we should use the fact $\lambda_{1\boxplus\Phi}(n) = \sum_{\ell m=n}A(1,m)$. Applying smooth dyadic partitions of the $\ell$-sum and  the $m$-sum, we get
\[
  \mathcal B(N) = \sum_{\substack{L\ll N \\ L \; \textrm{dyadic}}} \sum_{\substack{M\ll N \\ M \; \textrm{dyadic}}} \sum_{ \ell\geq 1} \sum_{m\geq1}  A(1,m)e\left(- 4(\ell mX)^{1/4}\right) W\left(\frac{\ell}{L}\right) W\left(\frac{m}{M}\right)  V\left(\frac{\ell m}{N}\right) .
\]
Here  $W$ is a fixed smooth function such that $\supp W\subset [1/4,2]$ and $W^{(k)} \ll 1$ for $k\geq0$.
Because of $\supp V\subset (1/20,20)$, we only need to consider the case $LM \asymp N$, in which case we can remove the weight function $V$ by a Mellin inversion.
Hence we obtain
\begin{equation}\label{eqn:B(N)<<B(LM)}
  \mathcal{B}(N) \ll T^\varepsilon \sup_{\substack{L\ll N, \ M\ll N \\ LM\asymp N}} |\mathcal{B}(L,M)|,
\end{equation}
where
\[
  \mathcal B(L,M) := \sum_{\ell\geq 1}  \sum_{m\geq1} A(1,m) e\left(- 4(\ell mX)^{1/4}\right) U\left(\frac{\ell}{L}\right) W\left(\frac{m}{M}\right) .
\]
Here $U, W$ are $T^\varepsilon$-inert functions with compact  support.
We have the following estimates.
\begin{proposition}\label{prop:B}
   Let $X^{5/14+\varepsilon} \leq T \leq X^{5/12-\varepsilon}$ and  $ T^4/X \asymp LM$.  Then we have
  \[
    \mathcal B(L,M) \ll T^{53/15+\varepsilon}X^{-5/6}.
  \]
\end{proposition}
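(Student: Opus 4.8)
The plan is to reduce everything to the variable $N := LM \asymp T^4/X$ and to split the analysis according to the size of $L$. Writing $X \asymp T^4/N$, the target bound becomes
\[
  \mathcal B(L,M) \ll T^{1/5} N^{5/6+\varepsilon},
\]
and the hypotheses $X^{5/14+\varepsilon}\le T\le X^{5/12-\varepsilon}$ translate into $T^{6/5}\le N\le T^{8/5-\varepsilon}$, precisely the range in which Lemma \ref{lemma:3} is available. I would split at the threshold $L_0 := T^{-2/5}N^{1/3}$, treating $L\le L_0$ by a direct appeal to Lemma \ref{lemma:3} and $L\ge L_0$ by Cauchy--Schwarz together with stationary phase.

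In the first range I would fix $\ell\asymp L$ and view the inner $m$-sum as an instance of $\mathscr S$ from Lemma \ref{lemma:3}: after the substitution $m=M\xi$ the phase is $-4(\ell X)^{1/4}M^{1/4}\xi^{1/4}$, so $\varphi(\xi)=\xi^{1/4}$ (hence $\beta=1/4$) and the effective parameter is $(\ell X)^{1/4}M^{1/4}\asymp (NX)^{1/4}\asymp T$. One checks that $L\le L_0$ forces $M=N/L\ge T^{2/5}N^{2/3}\ge T^{6/5}$, while $M\le N\le T^{8/5-\varepsilon}$, so Lemma \ref{lemma:3} applies and bounds the inner sum by $T^{3/10}M^{3/4+\varepsilon}$. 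Summing trivially over $\ell$ yields
\[
  \mathcal B(L,M)\ll L\,T^{3/10}M^{3/4+\varepsilon}=L^{1/4}N^{3/4}T^{3/10+\varepsilon},
\]
which, for $N$ fixed, is increasing in $L$ and hence bounded by its value $T^{1/5}N^{5/6+\varepsilon}$ at $L=L_0$.

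In the second range the roles reverse: since now $M$ may drop below $T^{6/5}$, I would exploit the bilinear structure. Applying Cauchy--Schwarz in $m$ and using the Ramanujan bound on average \eqref{eqn:RS3}, which gives $\sum_{m\asymp M}|A(1,m)|^2\ll M^{1+\varepsilon}$, leaves
\[
  |\mathcal B(L,M)|^2\ll M^{1+\varepsilon}\sum_{\ell_1,\ell_2\asymp L}\ \sum_{m} \tilde W\!\left(\frac mM\right) e\!\left(-4X^{1/4}\bigl(\ell_1^{1/4}-\ell_2^{1/4}\bigr)m^{1/4}\right)
\]
for a suitable fixed weight $\tilde W$. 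The diagonal $\ell_1=\ell_2$ contributes $\asymp LM$, giving $|\mathcal B|\ll L^{1/2}M^{1+\varepsilon}=N L^{-1/2}M^{\varepsilon}$, decreasing in $L$ and so bounded by $T^{1/5}N^{5/6+\varepsilon}$ for $L\ge L_0$. For the off-diagonal I would evaluate the $m$-sum by Poisson summation followed by the stationary phase estimate of Lemma \ref{lemma:stationary_phase}: the phase $\psi(m)=-4X^{1/4}(\ell_1^{1/4}-\ell_2^{1/4})m^{1/4}$ satisfies $\psi''(m)\asymp X^{1/4}|\ell_1^{1/4}-\ell_2^{1/4}|M^{-7/4}$, so each pair contributes $\asymp |\psi''|^{-1/2}\asymp X^{-1/8}|\ell_1-\ell_2|^{-1/2}L^{3/8}M^{7/8}$; summing $\sum_{\ell_1\ne\ell_2\asymp L}|\ell_1-\ell_2|^{-1/2}\asymp L^{3/2}$ gives an off-diagonal total $\ll X^{-1/8}L^{15/8}M^{7/8+\varepsilon}$ and hence $|\mathcal B|\ll X^{-1/16}N^{15/16+\varepsilon}$. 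A direct check using $N\le T^{8/5}$ shows this too is $\ll T^{1/5}N^{5/6+\varepsilon}$.

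The hard part will be the off-diagonal estimate in the second range: one must verify the second-derivative lower bound for $\psi$ uniformly in $\ell_1,\ell_2$, supply the negligible-tail bound from Lemma \ref{lemma:stationary_phase}(i) when no stationary point lies in the support, and separately dispose of the near-diagonal pairs $|\ell_1-\ell_2|\lesssim X^{-1/4}L^{3/4}M^{-1/4}$ for which the $m$-sum exhibits no cancellation (these occur only when $L\gtrsim T$ and are dominated by the cancelling part). Assembling the two ranges then yields $\mathcal B(L,M)\ll T^{1/5}N^{5/6+\varepsilon}=T^{53/15+\varepsilon}X^{-5/6}$, as required.
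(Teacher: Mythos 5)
Your small-$L$ range and your diagonal term agree with the paper: your threshold $L_0=T^{-2/5}N^{1/3}=T^{14/15}X^{-1/3}$ is exactly the paper's, your application of Lemma \ref{lemma:3} reproduces \eqref{eqn:smallL}, and your diagonal bound $NL^{-1/2}M^{\varepsilon}$ is the paper's zero-frequency estimate \eqref{eqn:middleL0}. The genuine gap is in your off-diagonal stationary-phase count. After Poisson summation in $m$, the phase $\psi(y)-ky$ has a stationary point for \emph{every} integer $k$ in the range of $\psi'$, and on the support $\psi'(y)\asymp Y/M$ with $Y:=X^{1/4}\bigl|\ell_1^{1/4}-\ell_2^{1/4}\bigr|M^{1/4}\asymp T|\ell_1-\ell_2|/L$. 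Hence when $Y\gg M$ there are $\asymp Y/M$ contributing frequencies, each of size $|\psi''|^{-1/2}\asymp MY^{-1/2}$, so the correct per-pair bound for the $m$-sum is $\asymp Y^{1/2}=(T|\ell_1-\ell_2|/L)^{1/2}$, not the single-stationary-point value $|\psi''|^{-1/2}$ that you used; your bound undercounts by the factor $Y/M$ exactly in the regime where the off-diagonal matters. (When $T^{\varepsilon}\ll Y\ll M$ the $m$-sum is in fact negligible by Lemma \ref{lemma:stationary_phase}(i), since no frequency has a stationary point, so there your bound is only wasteful, not wrong.) Summing the corrected bound $Y^{1/2}$ over pairs gives $\ll L^{2}T^{1/2}$ inside the Cauchy--Schwarz, hence $|\mathcal B(L,M)|\ll LM^{1/2}T^{1/4+\varepsilon}=L^{1/2}N^{1/2}T^{1/4+\varepsilon}$, which is precisely the paper's nonzero-frequency bound \eqref{eqn:middleL1} --- and it is \emph{increasing} in $L$.

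This breaks your two-range architecture: $L^{1/2}N^{1/2}T^{1/4}\leq T^{1/5}N^{5/6}$ forces $L\ll T^{-1/10}N^{2/3}$, which at the endpoint $N\asymp T^{6/5}$ (i.e. $T\asymp X^{5/14+\varepsilon}$) means $L\ll T^{7/10}$, whereas $L$ can be as large as $\asymp N\asymp T^{6/5}$; for instance at $L\asymp N$, $M\asymp1$ your Cauchy--Schwarz route can only give $\approx T^{1/4}N$, which exceeds the target. The (uncorrected) final check $|\mathcal B|\ll X^{-1/16}N^{15/16+\varepsilon}$ only appeared to close because of the undercounting. This is why the paper has a \emph{third} regime absent from your proposal: for $L\geq T^{1/2+\eta}$ with $\eta=1/15$ it applies Poisson summation in the long variable $\ell$ instead, localizing the dual variable to $\asymp T/L$ and using the second-derivative test to get $\mathcal B(L,M)\ll T^{1/2}M\ll T^{4-\eta}X^{-1}$ as in \eqref{eqn:largeL}, while Cauchy--Schwarz is run only for $L\leq T^{1/2+\eta'}$ with $\eta'=1/5>\eta$, so the two regimes overlap. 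To repair your argument you must add this Poisson-in-$\ell$ treatment for $L\gg T^{-1/10}N^{2/3}$; your near-diagonal discussion, by contrast, is not where the difficulty lies --- those terms are indeed dominated by the diagonal as you say.
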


%\begin{proof}[Proof of Proposition \ref{prop:I} (ii)]
  For  $X^{5/14+\varepsilon} \leq T \leq X^{5/12-\varepsilon}$, by \eqref{eqn:I<<sum}, \eqref{eqn:B(N)<<B(LM)}, and Proposition \ref{prop:B}, we have
  \[
    \mathcal I \ll T^\varepsilon
    \sup_{N\asymp T^{4}/X}
    \frac{T^{1/2}}{ N^{1/2}} T^{53/15+\varepsilon}X^{-5/6}
    + T^{1+\varepsilon}
    \ll T^{61/30+\varepsilon} X^{-1/3},
  \]
  as claimed. This completes the proof of Proposition \ref{prop:I}.
%\end{proof}

\subsection{Estimates of bilinear forms}

In this subsection, we make use of the bilinear structure to prove Proposition \ref{prop:B}, which is an improvement of \cite[Prop. 21]{huang2021rankin}.

%\begin{proof}[Proof of Proposition \ref{prop:B}]
\subsubsection{Large $L$}
  If $L\geq T^{1/2+\eta}$ for some $\eta>0$, then by the Poisson summation formula, we have
  \begin{align*}
    \mathcal B(L,M) &
    =  \sum_{m\geq1} A(1,m) W\left(\frac{m}{M}\right)
    \sum_{\ell\in\mathbb{Z}} \int_{\mathbb{R}} e\left(- 4(y mX)^{1/4}\right) U\left(\frac{y}{L}\right) e(-\ell y) \dd y .
  \end{align*}
  Making a change of variable $y=L\xi$, we get
  \begin{align*}
    \mathcal B(L,M) &
    = L \sum_{m\geq1} A(1,m) W\left(\frac{m}{M}\right)
    \sum_{\ell\in\mathbb{Z}} \int_{\mathbb{R}} e\left(- 4(L mX)^{1/4} \xi^{1/4} - \ell L\xi\right) U\left(\xi\right) \dd \xi.
  \end{align*}
  By repeated integration by parts we know the contribution is negligibly small unless $\ell \asymp T/L$, in which case by the second derivative test (Lemma \ref{lemma:stationary_phase}) we have
  \[
    \int_{\mathbb{R}} e\left(- 4(L mX)^{1/4} \xi^{1/4} - \ell L\xi\right) U\left(\xi\right) \dd \xi
    \ll \frac{1}{\sqrt{T}}.
  \]
  Hence we obtain
  \begin{equation} \label{eqn:largeL}
    \mathcal B(L,M) \ll L
    \sum_{m\asymp M} |A(1,m)|
    \sum_{\ell\asymp T/L} T^{-1/2} + T^{-2023}
    \ll
    T^{1/2} M \ll T^{4-\eta} /X .
  \end{equation}
  Here we have used \eqref{eqn:RS3}.

  \subsubsection{Medium $L$}
%  If $L\leq T^{1/2+9\delta}$, then
%  \[
%    M\asymp N/L \gg (T^{4}/X) /T^{1/2+9\delta}  \asymp T^{7/2-9\delta}/X  \geq T^{7/2-9\delta-1/(2/5-4\delta)} \geq T^{1-40\delta}
%  \]
%  provided $T\geq X^{2/5-4\delta}$ and $\delta<1/100$.
  If $L\leq T^{1/2+\eta'}$ with $\eta'\in[0,1/3]$, then
  \[
    M\asymp N/L \gg (T^{4}/X) /T^{1/2+\eta'}  \asymp T^{7/2-\eta'}/X  \geq T^{7/2-\eta'-14/5-\varepsilon}  \geq T^{3/10}
  \]
  provided $T\geq X^{5/14+\varepsilon}$.
  By the Cauchy--Schwarz inequality, we have
  \begin{align*}
    \mathcal B(L,M) ^2 & \leq   \bigg( \sum_{m\geq1} \Big|A(1,m) W\left(\frac{m}{M}\right)\Big| \  \Big| \sum_{\ell\geq 1} e\left(- 4(\ell mX)^{1/4}\right) U\left(\frac{\ell}{L}\right) \Big| \bigg)^2 \\
    & \ll M^{1+\varepsilon}   \sum_{m\geq1}   W\left(\frac{m}{M}\right) \ \Big| \sum_{\ell\geq 1} e\left(- 4(\ell mX)^{1/4}\right) U\left(\frac{\ell}{L}\right) \Big|^2  \\
    & = M^{1+\varepsilon}  \sum_{\ell\geq 1}\sum_{\ell'\geq 1} U\left(\frac{\ell}{L}\right)  U\left(\frac{\ell'}{L}\right)
     \sum_{m\geq1}   W\left(\frac{m}{M}\right)   e\left(- 4(\ell mX)^{1/4}+ 4(\ell' mX)^{1/4}\right) .
  \end{align*}
  By the Poisson summation formula, the innermost $m$-sum is equal to
  \begin{align*}
    \sum_{m\in\mathbb{Z}} \int_{\mathbb{R}}  W\left(\frac{y}{M}\right)   e\left((- 4(\ell X)^{1/4}+ 4(\ell' X)^{1/4}) y^{1/4} - my\right) \dd y.
  \end{align*}
  By making a change of variable $y=M\xi$, we get
  \begin{equation}\label{eqn:xi-integral}
    M \sum_{m\in\mathbb{Z}} \int_{\mathbb{R}}  W\left(\xi\right)   e\left(4 X^{1/4} M^{1/4} \frac{\ell'-\ell}{\ell^{3/4}+\ell^{1/2}\ell'^{1/4}+\ell^{1/4}\ell'^{1/2}+\ell'^{3/4}} \xi^{1/4} - mM \xi\right) \dd \xi.
  \end{equation}

  We fist consider the zero frequency $m=0$. If $\ell\neq\ell'$, then we have
  \begin{equation*}
    X^{1/4} M^{1/4} \frac{\ell'-\ell}{\ell^{3/4}+\ell^{1/2}\ell'^{1/4}+\ell^{1/4}\ell'^{1/2}+\ell'^{3/4}}
    \gg X^{1/4} M^{1/4} \frac{1}{L^{3/4}} \asymp T/L \gg T^{1/2-\eta'} \geq T^{1/6}.
  \end{equation*}
  By repeated integration by parts, we know the contribution is negligibly small.
  If $\ell=\ell'$, then this contribution to $\mathcal B(L,M) ^2$ is $O(LM^{2+\varepsilon})$.
  Hence the contribution from the zero frequency $m=0$ to $\mathcal B(L,M)$ is bounded by
  \begin{align} \label{eqn:middleL0}
    O(L^{1/2} M^{1+\varepsilon})=O( L^{-1/2} T^{4+\varepsilon} X^{-1}).
  \end{align}

  Now we consider the nonzero frequency $m\neq0$.
  The contribution from the nonzero frequency  to $\mathcal B(L,M)$ is bounded by $O(T^{-2023})$ if $M\geq T^{1+\varepsilon}$. Now we assume  $M\leq T^{1+\varepsilon}$.
  If $m\gg T^{1+\varepsilon}/M$, then the contribution is negligibly small by the repeated integration by parts.
  We can insert a dyadic partition of unity for the $m$-sum. So we need to consider the case $m\asymp R$ with $1\ll R\ll T^{1+\varepsilon}/M$.
  Note that we have
  $mM\asymp RM \gg T^{3/10}$. By the stationary phase method, we know that the $\ell$ and $\ell'$ sums can be restricted to
  $ \ell-\ell'\asymp RML/T$,
%  $RMLT^{-1-\varepsilon} \ll \ell-\ell'\ll RMLT^{-1+\varepsilon}$,
  in which case the second derivative test shows that the $\xi$-integral in \eqref{eqn:xi-integral} is bounded by $O(1/\sqrt{RM})$.
  Hence the nonzero frequency $m\neq0$ to $\mathcal B(L,M)$ is
  \begin{align} \label{eqn:middleL1}
    & \ll \bigg( M^{2+\varepsilon} \sup_{1\ll R\ll T^{1+\varepsilon}/M} \mathop{\sum_{\ell\asymp L}\sum_{\ell'\asymp L}}_{\ell-\ell'\asymp RML/T} \sum_{ m\asymp R  } \frac{1}{\sqrt{RM}}
    \bigg)^{1/2} + T^{-2023} \nonumber \\
    & \ll L  M^{1/2} T^{1/4+\varepsilon}
    \asymp L^{1/2} T^{9/4+\varepsilon} X^{-1/2}.
  \end{align}
  By  \eqref{eqn:middleL0} and \eqref{eqn:middleL1} we have
  \begin{equation}\label{eqn:mediumL}
    \mathcal B(L,M) \ll L^{-1/2} T^{4+\varepsilon} X^{-1} + L^{1/2} T^{9/4+\varepsilon} X^{-1/2}.
  \end{equation}

  \subsubsection{Small $L$}\label{subsubsec:smallL}
  If $L\leq T^{14/5}/X$, then we have $M\asymp N/L \gg T^{6/5}$.
  Note that $M\leq N \ll T^{4}/X \ll T^{8/5-\varepsilon}$ provided $T\leq X^{5/12-\varepsilon}$.
  By Lemma  \ref{lemma:3} we have
  \begin{align} \label{eqn:smallL}
    \mathcal B(L,M) & \leq  \sum_{ \ell\asymp  L} \Big| \sum_{m\geq1}  A(1,m) W\left(\frac{m}{M}\right)  e\left(- 4(\ell mX)^{1/4}\right) \Big|  \nonumber
     \\
     & \ll L T^{3/10+\varepsilon} M^{3/4}
   \ll L^{1/4}  T^{33/10+\varepsilon} X^{-3/4}.
  \end{align}

  \subsubsection{Proof of Proposition \ref{prop:B}}
  Let $L_0:=T^{14/15}X^{-1/3}$ be the solution of
  \[
    L^{-1/2} T^{4} X^{-1} = L^{1/4}  T^{33/10} X^{-3/4}.
  \]
  Note that $L_0\leq T^{14/5}/X$ if $T\geq T^{5/14}$.
  If $L\leq L_0$, then by \eqref{eqn:smallL} we have
  \[
    \mathcal B(L,M) \ll T^{53/15+\varepsilon}X^{-5/6}.
  \]
  If $L_0\leq L\leq T^{1/2+\eta'}$ with $\eta'\in[0,1/3]$, then by \eqref{eqn:mediumL} we have
  \[
    \mathcal B(L,M) \ll T^{53/15+\varepsilon}X^{-5/6} + T^{5/2+\eta'/2+\varepsilon}X^{-1/2}
    \ll T^{53/15+\varepsilon}X^{-5/6} ,
  \]
  provided $T^{31/10-3\eta'/2}\geq X$. We should take $\eta'$ to the solution of  $(31/10-3\eta'/2)5/14=1$, that is, $\eta'=1/5$.
  If $L\geq T^{1/2+\eta}$, then by \eqref{eqn:largeL} we have
  \[
    \mathcal B(L,M) \ll T^{4-\eta}X^{-1} \ll T^{53/15+\varepsilon}X^{-5/6},
  \]
  provided $T^{7/15-\eta}\leq X^{1/6}$. Take $\eta$ to be the solution of $(7/15-\eta)5/12=1/6$, i.e., $\eta=1/15$.
  Note that  $\eta'>\eta$.
  This completes the proof of Proposition \ref{prop:B}.

%\end{proof}

\section*{Acknowledgements}
%
%The author is grateful to the referee for his/her very helpful comments and suggestions.
The author would like to thank Yongxiao Lin for sharing the preprint \cite{KLM}, which led to this improvement. He also wants to take this opportunity to thank the referees for his paper \cite{huang2021rankin}, which he forgot to write down in the published version.
% Jianya Liu, Ze\'ev Rudnick, and Jie Wu for their encouragements.

%%%%%%%%%%%%%%%%%%%%%%%%%%%%%%%%%%%%%%%%%%%%%%%%%%%%%%%%%%%%%%
%%%%%                      References                    %%%%%
%%%%%%%%%%%%%%%%%%%%%%%%%%%%%%%%%%%%%%%%%%%%%%%%%%%%%%%%%%%%%%
%\medskip

\end{document}